\newtheorem{theorem}{Theorem}[section]
\newtheorem{proposition}[theorem]{Proposition}
\newtheorem{lemma}[theorem]{Lemma}
\newtheorem{corollary}[theorem]{Corollary}
\theoremstyle{remark}
\newtheorem{remarks}[theorem]{Remarks}
\newtheorem{definition}[theorem]{Definition}
\def\P{\mathbb{P}}
\def\RR{\mathbb{R}}
\def\V{\mathfrak{V}}
\def\a{\mathfrak{a}}
\def\b{\mathfrak{b}}
\def\c{\mathfrak{c}}
\def\MM{\tilde{M}}
\def\MMM{\tilde{\tilde{M}}}
\begin{document}
\title{A canonical Laplacian on the algebra of densities on a projectively connected manifold}
\author{Jacob George}
\address{School of Mathematics - University of Manchester, Oxford Road, Manchester M13 9PL}
\email{jgeorge@maths.manchester.ac.uk}

\begin{abstract}
On a manifold with a projective connection we canonically assign a
second order differential operator acting on the algebra of all
densities to any tensor density $S^{ij}$ of fixed weight $\lambda$.
In particular, this implies that on any projectively connected
manifold, a `bracket' (symmetric biderivation) on the algebra of
functions extends canonically to the algebra of densities.
\end{abstract}

\keywords{projective connection, densities, quantisation, Laplacian}

\date{\today}
\maketitle
\section{Introduction}
In \cite{bialopap}, it was shown that a projective connection (or more accurately a `projective class') on a manifold $M$ can be canonically lifted to a linear connection on the total space of a principal $\RR$-bundle over $M$.  In addition, the algebra of all densities on $M$ can be considered as a subalgebra of the algebra of smooth functions on this bundle.  This paper builds on the results of \cite{bialopap} and further strengthens the close link between projective classes and the algebra of densities.

Projective connections, having been conceived in the 1920s by E. Cartan in \cite{Cartan} have flitted in and out of relative obscurity during the last century. They have emerged recently in a variety of contexts from integrable systems and projectively equivariant quantisation to so called Cartan algebroids. The concept itself has been recast and developed periodically most notably by Veblen and Thomas (\cite{1925thomas1,Thomas2,VeblenThomas}), Kobayashi and Nagano (\cite{KobayashiNagano,KobayashiTransf}), and Sharpe (\cite{SharpeBook}) among many others.  Of these refinements, we adopt the notion of \emph{projective equivalence class}, pioneered by Veblen and Thomas (and equivalent to the \emph{projective structures} of Kobayashi), as our definition of choice.  From here, to avoid confusion with other inequivalent definitions of projective connection, we exclusively use the term `projective class'.

Here, we briefly recall the notion of projective class and outline the relation between this and induced connections on the projectivisation of the tangent bundle.  A tour of Thomas' construction, the projective Laplacian and upper connections defined in \cite{bialopap} and an account of the results of Khudaverdian and Voronov follow and set the scene for the main results.  We prove that on a manifold $M$ endowed with a projective connection, a tensor density $S^{ij}$ of weight $\lambda$ can be extended to give an invariant homogeneous second order differential operator of weight $\lambda$ acting on the algebra of densities $\V(M)$ of all weights.  This immediately implies that on a manifold with a projective class, a symmetric biderivation (or \emph{bracket}) on $C^{\infty}(M)$ can be canonically extended to a bracket on $\V(M)$.  The operator and brackets constructed are then shown to be part of a family of operators and brackets respectively, indexed by the space of non-vanishing global densities and are compared with the results on projectively equivariant quantisation in \cite{Bouarroudj1}.  Finally, we discuss possible improvements and applications.
\section{Preliminaries}\label{prelim}
\subsection{Projective Connections}
The term `projective connection' refers to a number of disparate notions which are by no means equivalent.  A full survey being somewhat beyond the remit of this text, we refer the reader to \cite{crampin} and the references therein for a fuller picture.  Here we adopt the following definition.
\begin{definition}
A \emph{projective class} on a manifold $M$ is an equivalence class of torsion-free linear connections on $M$ having the same geodesics up to reparametrisation.
\end{definition}
Projective classes are also variously referred to as \emph{projective connections} and a \emph{projective structures}.  Despite the elegant geometric formulation, in practice, it is more useful to make use of the following classical proposition.
\begin{proposition}Let $M$ be a manifold endowed with two torsion-free linear connections $\nabla$ and $\bar{\nabla}$ with coefficients $\Gamma^{k}_{ij}$ and $\bar{\Gamma}^{k}_{ij}$ respectively.  Then $\nabla$ and $\bar{nabla}$ belong to the same projective equivalence class if and only if either
\begin{enumerate}[i)]
\item for some $1$-form $\omega$ and any two vector fields $X$ and $Y$,
\[\nabla_{X}Y - \bar{\nabla}_{X}Y := \omega(X)Y + \omega(Y)X,\]
\item or \begin{equation}\label{projcond}\Pi^{k}_{ij} := \Gamma^{k}_{ij} - \frac{1}{n+1}\left(\delta^{k}_{i}\Gamma^{s}_{sj} + \delta^{k}_{j}\Gamma^{s}_{is}\right) = \bar{\Gamma}^{k}_{ij} - \frac{1}{n+1}\left(\delta^{k}_{i}\bar{\Gamma}^{s}_{sj} + \delta^{k}_{j}\bar{\Gamma}^{s}_{is}\right) =: \bar{\Pi}^{k}_{ij}.\end{equation}
\end{enumerate}
\end{proposition}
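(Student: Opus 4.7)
The plan is to prove the equivalence of three statements: (a) $\nabla$ and $\bar\nabla$ share geodesics up to reparametrisation, (b) condition (i) of the proposition, and (c) condition (ii). I would first establish the purely algebraic equivalence $\mathrm{(i)}\Leftrightarrow\mathrm{(ii)}$ and then bridge to the geometric condition $\mathrm{(a)}$ via the analysis of the difference tensor $T^{k}_{ij}:=\bar\Gamma^{k}_{ij}-\Gamma^{k}_{ij}$, which is symmetric in $i,j$ because both connections are torsion-free.

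For $\mathrm{(i)}\Leftrightarrow\mathrm{(ii)}$ I would write (i) in coordinates as $T^{k}_{ij}=\delta^{k}_{i}\omega_{j}+\delta^{k}_{j}\omega_{i}$ and contract $k$ with $i$ to obtain $T^{s}_{sj}=(n+1)\omega_{j}$. This both pins down $\omega$ uniquely in terms of the connection coefficients and, upon substitution into the display in (ii), shows that the Thomas symbols $\Pi^{k}_{ij}$ and $\bar\Pi^{k}_{ij}$ agree. Conversely, starting from (ii), the 1-form $\omega_{j}:=\tfrac{1}{n+1}(\bar\Gamma^{s}_{sj}-\Gamma^{s}_{sj})$ is tensorial (the non-tensorial parts of $\Gamma^{s}_{sj}$ cancel, as one checks from the standard transformation law) and rearranging (\ref{projcond}) recovers the formula for $T^{k}_{ij}$ in (i).

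For the geometric half I would argue $\mathrm{(i)}\Rightarrow\mathrm{(a)}$ directly: if $\gamma$ is a $\nabla$-geodesic then $\bar\nabla_{\dot\gamma}\dot\gamma=\nabla_{\dot\gamma}\dot\gamma+2\omega(\dot\gamma)\dot\gamma=2\omega(\dot\gamma)\dot\gamma$, so $\dot\gamma$ is $\bar\nabla$-parallel up to a scalar, which is exactly the condition that $\gamma$ becomes a $\bar\nabla$-geodesic after reparametrisation. The converse direction $\mathrm{(a)}\Rightarrow\mathrm{(i)}$ is where the main obstacle lies. The hypothesis says that for every $v\in T_{p}M$ the vector $T^{k}_{ij}(p)v^{i}v^{j}$ is a scalar multiple of $v^{k}$, i.e.\ there exists $f(p,v)$ with $T^{k}_{ij}v^{i}v^{j}=f(p,v)\,v^{k}$. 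I would then show that $v\mapsto f(p,v)$ must be linear in $v$ by counting homogeneity degrees on both sides (the left-hand side is a vector-valued quadratic form in $v$, so $f$ must be homogeneous of degree one), whence $f=2\omega_{i}v^{i}$ for a covector $\omega$ at $p$. Applying polarisation to the resulting identity $T^{k}_{ij}v^{i}v^{j}=(\delta^{k}_{i}\omega_{j}+\delta^{k}_{j}\omega_{i})v^{i}v^{j}$, and using that both tensors are symmetric in $i,j$, yields (i). Finally, smoothness of $\omega$ in $p$ follows from its explicit expression via the trace of $T^{k}_{ij}$, closing the argument.

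The delicate point is ensuring $f(p,v)$ really is a smooth linear function of $v$ rather than just homogeneous; one must take care to choose enough independent directions $v$ so that the scalar of proportionality is well-defined where $v\ne 0$ and extends smoothly, and then use symmetry to recover the full tensor identity from its quadratic trace. Everything else is book-keeping with index contractions.
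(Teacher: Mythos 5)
The paper does not actually prove this proposition; it is quoted as a classical fact (due essentially to Weyl and the Thomas--Veblen school) and used as the working definition of a projective class, so there is no in-paper argument to compare against. Judged on its own terms, your outline is the standard and correct one: the equivalence $\mathrm{(i)}\Leftrightarrow\mathrm{(ii)}$ by contracting $T^{k}_{ij}=\delta^{k}_{i}\omega_{j}+\delta^{k}_{j}\omega_{i}$ to get $\omega_{j}=\tfrac{1}{n+1}T^{s}_{sj}$ (using torsion-freeness so that the two traces $\Gamma^{s}_{sj}$ and $\Gamma^{s}_{js}$ coincide), and the bridge to the geodesic condition via the observation that a curve is a geodesic up to reparametrisation iff $\nabla_{\dot\gamma}\dot\gamma$ is proportional to $\dot\gamma$. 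That part is complete.

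The one step you have not actually closed is the one you flag yourself: deducing from $T^{k}_{ij}v^{i}v^{j}=f(v)\,v^{k}$ that $f$ is \emph{linear} in $v$. Homogeneity of degree one does not suffice ($\lVert v\rVert$ is homogeneous of degree one and not linear), so ``counting homogeneity degrees'' is not an argument. The clean fix uses that the left-hand side is polynomial: writing $Q^{k}(v):=T^{k}_{ij}v^{i}v^{j}$, the proportionality condition is the polynomial identity $Q^{k}(v)v^{l}=Q^{l}(v)v^{k}$ for all $k,l$; since $v^{k}$ and $v^{l}$ are coprime linear forms (here $n>1$ is needed), $v^{k}$ divides $Q^{k}$, so $Q^{k}(v)=v^{k}L(v)$ for a single linear form $L=2\omega_{i}v^{i}$, and polarisation of the symmetric tensors then gives (i). (An equivalent classical route: polarise first to get $T(u,v)\in\mathrm{span}(u,v)$ for independent $u,v$ and solve for the coefficients.) With that substitution your proof is complete; without it, the converse direction $\mathrm{(a)}\Rightarrow\mathrm{(i)}$ is not yet established.
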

We will also refer to $\Pi^{k}_{ij}$ as the \emph{coefficients of the projective class} or simply a \emph{projective class}.  The quantities $\Pi^{k}_{ij}$ were first introduced by T.Y.Thomas in \cite{1925thomas1} and were studied by the so called `Princeton School' in the 1920s.  A coordinate atlas can be chosen on $M$ for which $\Pi^{k}_{ij}$ disappear if and only if the corresponding transition functions belong to the projective group, that is the manifolds is locally projective.  The failure of $\Pi^{k}_{ij}$ to transform as a tensor is a possible definition of a multidimensional Schwarzian derivative.  For further details, see Ovsienko and Tabachnikov's book \cite{OvsienkoTabachnikov}.

One may conceive of a more natural seeming notion of projective connection.  Consider a vector bundle $E\to M $ equipped with a linear connection.  One could call the connection induced on the projectivisation of $E\to M $ `the associated projective connection'.  Let us explore this idea further.  A linear connection on a bundle $E\to M$ can be specified by a distribution in $TE$ deemed \emph{horizontal}.  Such a choice of distribution gives rise as usual to a lifting of tangent vectors from $TM$ to $TE$.  If $(x^{i}, \xi^{a})$ is a system of local coordinates on $E$ with $x^{1},\dots,x^{n}$ being base-like and $\xi^{0},\dots,\xi^{k}$ being fibre-like, the lift of $\frac{\partial}{\partial x^{i}}$ is given by
\[\widetilde{\left(\frac{\partial}{\partial x^{i}}\right)} = \frac{\partial}{\partial x^{i}} + \Gamma^{b}_{ia}\xi^{a}\frac{\partial}{\partial \xi^{b}}\]
where $\Gamma^{b}_{ia}$ are the connection coefficients.  These coefficients of course are specified by $\widetilde{\partial_{i}}(\xi^{a}) = \Gamma^{a}_{ib}\xi^{b}$.
Now let us projectivise the whole setup.  For a vector space $F$, let $\P(F)$ denote the associated projective space.  Then define $\P(E)\to M$ as the projectivisation of $E\to M$, that is, the bundle whose fibre at $x\in M$ is the projectivisation $\P(E_{x})$ of the fibre of $E$ at $x$, and whose transition functions are induced by those of $E$.  Choosing an horizontal distribution on $E$ induces a choice of distribution on $\P(E)$ via the derivative of the natural projection $\P(E)\to E$.  A lifting of vectors from $TM$ to $TE$ then induces a corresponding lifting from $TM$ to $T\P(E)$.  In inhomogeneous coordinates in which $\xi^{0} \neq 0$, the local coordinate system on $\P(E)$ is specified by $\frac{\xi^{a}}{\xi^{0}}$, $(1\le a\le k)$.  We have
\[\widetilde{\left(\frac{\partial}{\partial x^{i}}\right)}\left(\frac{\xi^{a}}{\xi^{0}}\right) = \Gamma^{a}_{ib}\frac{\xi^{b}}{\xi^{0}} - \frac{\xi^{a}}{\xi^{0}}\Gamma^{0}_{ib}\frac{\xi^{b}}{\xi^{0}}.\]
This expression vanishes for coefficients of the form $\Gamma^{a}_{ib} = \delta^{a}_{b}\gamma_{i}$.  For any choice of $\lambda\in \RR$ therefore, the lifts
\[\widetilde{\left(\frac{\partial}{\partial x^{i}}\right)} = \frac{\partial}{\partial x^{i}} + \left(\Gamma^{b}_{ia} + \lambda \delta^{b}_{a}\Gamma^{c}_{ic}\right)\xi^{a}\frac{\partial}{\partial \xi^{b}}\]
from $TM$ to $TE$ all induce the same horizontal distribution on $\P(E)$.  Indeed, such a choice of coefficients no longer specify a linear connection on $E$, rather a (projective) Ehresmann connection.  For a comprehensive account, refer to \cite{Hermann}.

The vanishing of the lifting for coefficients of the form $\Gamma^{a}_{ib} = \delta^{a}_{b}\gamma_{i}$ implies that the induced connection depends only upon the `trace free' part of $\Gamma^{a}_{ib}$, namely $\Phi^{a}_{ib}:=\Gamma^{a}_{ib} - \frac{1}{k+1}\Gamma^{c}_{ic}\delta^{a}_{b}$ where $\mathrm{rank}(E) = k+1$ (see \cite[Prop. 10, p38]{Manin}).  In the special case of $E = TM$, there is a choice between taking the first trace $\Gamma^{k}_{kj}$ or the second trace $\Gamma^{k}_{ik}$.  If we demand that both traces vanish, then the traceless part becomes
\[\Phi^{k}_{ij} = \Gamma^{k}_{ij} - \frac{1}{n+1}(\delta^{k}_{i}\Gamma^{s}_{sj} + \delta^{k}_{j}\Gamma^{s}_{is}) = \Pi^{k}_{ij}.\]
That two linear connections on $M$ induce the same linear connection on $\P(TM)$ is therefore implied by their being in the same projective class.
Before detailing the link between projective classes and densities, we give the following result from \cite{bialopap} the first part of which will form the foundation on which what follows is built.
\begin{theorem}\label{square}
Let $\Pi^{k}_{ij}$ be a projective class and $S^{ij}$ a tensor field on $M$.
\begin{enumerate}[i)]
\item Then
\[S^{ij}\partial_{i}\partial_{j} + \left(\frac{2}{n+3}\partial_{j}S^{ij} - \frac{n+1}{n+3}S^{jk}\Pi^{i}_{jk}\right)\partial_{i}\]
is a well defined differential operator on $M$, the \emph{projective Laplacian} on $M$.
\item There is a canonical associated canonical upper connection\footnote{Let $S^{ij}$ be a tensor on a manifold $M$ and $S^{\sharp}:T^{*}M \to TM$ the map $S^{\sharp}(\omega_{i}dx^{i}) = S^{ij}\omega_{j}\partial_{i}$.  Recall that an \emph{upper connection} or \emph{contravariant derivative} over $S^{ij}$ in a vector bundle $E\to M$ is a bilinear map $\nabla:\Omega^{1}(M)\times\Gamma(E)\to\Gamma(E)$ such that $\nabla^{f\omega}\sigma = f\nabla^{\omega}\sigma$ and $\nabla^{\omega}f\sigma = S^{\sharp}(\omega)(f)\sigma + f\nabla^{\omega}\sigma$ for any $1$-form $\omega$ and $f\in C^{\infty}(M)$.  Upper connections in the volume bundle arise as subprincipal symbols of second order differential operators, see \cite{oloii}.} over $S^{ij}$ in the bundle of volume forms whose coefficients are
    \[\Gamma^{i} = \frac{n+1}{n+3}\left(\partial_{j}S^{ij} + S^{jk}\Pi^{i}_{jk}\right).\]
\end{enumerate}
\end{theorem}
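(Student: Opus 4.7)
The plan is a direct coordinate verification: since $S^{ij}\partial_i\partial_j$ and $\Pi^k_{ij}$ individually fail to transform tensorially, the content of part (i) is that the prescribed first-order correction cancels both non-tensorial contributions simultaneously, producing a coordinate-invariant operator on $M$. Part (ii) should then drop out of a standard fact about subprincipal symbols of scalar second-order operators.

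First I would derive the transformation law of $\Pi^k_{ij}$. Writing $J^a_i = \partial\tilde x^a/\partial x^i$ with inverse $J^i_a$, the Christoffel symbols obey the standard inhomogeneous law, and the trace subtraction in the definition of $\Pi$ refines this to
\[\tilde\Pi^c_{ab} = J^c_k J^i_a J^j_b\,\Pi^k_{ij} + T^c_{ab},\]
where $T^c_{ab}$ is the trace-free part (in the sense relevant to $\Pi$) of the Schwarzian $J^c_k\,\partial_a\partial_b x^k$. The identity $\Pi^k_{ik}=0$, immediate from torsion-freeness, ensures $T^c_{cb}=0$, so $T$ is a genuine ``projective Schwarzian'' and its explicit shape is essentially forced.

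Next I would expand $S^{ij}\partial_i\partial_j$ and $\partial_j S^{ij}\partial_i$ under a coordinate change. The chain rule applied to the mixed partial produces, beyond the tensorial leading term, a first-order piece involving $\tilde S^{ab}(\partial_a\partial_b x^i)\tilde\partial_i$; differentiating $\tilde S^{ab}$ back to $S^{ij}$ produces a parallel correction of the same type. Combined with the Schwarzian contribution from $S^{jk}\Pi^i_{jk}$ computed in the previous step, the three non-tensorial pieces form a small linear system in the dimension $n$, and I would check that cancellation holds precisely when the coefficients are $\tfrac{2}{n+3}$ and $-\tfrac{n+1}{n+3}$. Crucially, the symmetry of $S^{ij}$ together with the tracelessness of $\Pi$ collapse what look like two independent Schwarzian contributions into proportional expressions, and the rational factors are then uniquely pinned down. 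For part (ii), a scalar second-order operator $L=S^{ij}\partial_i\partial_j + b^i\partial_i$ determines, via the general principle recalled in the footnote, an upper connection over $S^{ij}$ in the appropriate density line bundle; feeding the explicit $b^i$ from (i) into this construction (with the weight corresponding to volume forms) produces exactly $\Gamma^i = \tfrac{n+1}{n+3}(\partial_j S^{ij}+S^{jk}\Pi^i_{jk})$, the sign change relative to (i) reflecting the standard subprincipal-symbol shift.

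The main obstacle is the bookkeeping in the cancellation step: the Schwarzian from $\partial_i\partial_j$ and the Schwarzian from $\Pi$ each carry their own trace-subtractions involving $n$, and one has to show that after contraction with $S$ the two cohere into a single relation solvable only by the stated coefficients. No individual computation is deep, but careful tracking of symmetrisations and traces is essential, and it is precisely the invariance $\Pi^k_{ik}=0$ that makes the degree-counting come out right.
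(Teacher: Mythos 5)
Your strategy is correct and would go through. I checked the bookkeeping: writing the candidate operator as $S^{ij}\partial_i\partial_j+(\alpha\,\partial_jS^{ij}+\beta\,S^{jk}\Pi^i_{jk})\partial_i$ and setting $A^a_{cd}=J^a_k\,\partial^2x^k/\partial\bar x^c\partial\bar x^d$, the only two anomaly structures that survive a coordinate change are $\bar S^{cd}A^a_{cd}$ and $\bar S^{ad}A^s_{sd}$ (the latter being the log-Jacobian derivative contracted with $S$), and their cancellation forces $-1+\alpha-\beta=0$ and $\alpha+\tfrac{2\beta}{n+1}=0$, whose unique solution is $\alpha=\tfrac{2}{n+3}$, $\beta=-\tfrac{n+1}{n+3}$; for (ii), the quantity $\partial_jS^{ij}-b^i$ then transforms with the correction $+\bar S^{ab}\bar\partial_b\log|J|$, which is precisely the upper-connection law in the volume bundle, and it equals the stated $\Gamma^i$. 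Be aware, however, that this paper contains no proof of Theorem \ref{square} to compare against: the statement is imported verbatim from \cite{bialopap}. The nearest thing to a derivation here is Section \ref{classical}, which recovers the same operator by a genuinely different mechanism: lift the projective class to a linear connection on the Thomas bundle $\tilde M$, form the associated operator there, and invoke the uniqueness statement of Theorem \ref{Koszul} --- the operator is characterised as the unique self-adjoint, constant-free generator of the bracket $S^{ij}\partial_if\,\partial_jg$ for the canonical scalar product on densities, which produces the coefficients $\tfrac{2}{n+3}$ and $\tfrac{n+1}{n+3}$ without any coordinate chase. Your route is more elementary and self-contained; the paper's route explains where the coefficients come from and extends directly to brackets on all of $\V(M)$. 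Two small caveats on your write-up: the identity $\Pi^k_{ik}=0$ is not itself what drives the cancellation --- what enters is the explicit $-\tfrac{1}{n+1}\bigl(\delta^c_a\omega_b+\delta^c_b\omega_a\bigr)$ trace correction in the transformation law of $\Pi$ (equivalent in origin, but you should use the latter form); and for (ii) you lean on the subprincipal-symbol principle as a black box, whereas a two-line direct verification of the transformation law of $\partial_jS^{ij}-b^i$ would make the argument self-contained.
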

These formulae imply in particular that on manifolds which are locally isomorphic to real projective space, $S^{ij}\partial_{i}\partial_{j} + \frac{2}{n+3}\partial_{j}S^{ij}\partial_{i}$ is an invariant differential operator and $\frac{n+1}{n+3}\partial_{j}S^{ij}$ is a connection in the bundle of volume forms for any tensor field $S^{ij}$.
\subsection{The algebra of densities}
Let $M$ be a manifold.  A \emph{density of weight $\lambda$} on $M$ or \emph{$\lambda$-density} for short is a formal expression taking the form $\phi(x)|Dx|^{\lambda}$ in local coordinates.  Here $Dx$ denotes the local coordinate volume form and $\lambda\in\RR$.  The space of $\lambda$-densities forms a real vector space denoted $\V_{\lambda}(M)$.
Two densities may be multiplied using the rule
\[\phi(x)|Dx|^{\lambda}\cdot\psi(x)|Dx|^{\mu}:= \phi(x)\psi(x)|Dx|^{\lambda+\mu}.\]
Let us denote by $\V(M) = \bigoplus_{\lambda}\V_{\lambda}(M)$ the space of all densities.  Under the multiplication defined above this becomes the so called \emph{algebra of densities} introduced in \cite{oloii}.  Note that $\V_{0}(M) = C^{\infty}(M)$ is a subalgebra of $\V(M)$.

This algebra is endowed gratis with two natural features.  First, exploiting the close relation between densities and volume, there is a natural scalar product on $\V(M)$.  For $\phi\in\V_{\lambda}(M)$, $\psi\in\V_{\mu}(M)$, define
\begin{equation}\label{scalarproduct}
\langle\phi,\psi\rangle = \left\{\begin{array}{cc}\int_{M}\phi\psi& \mbox{if }\lambda+\mu = 1;\\0&\mbox{otherwise.}\end{array}\right.
\end{equation}
The second is a derivation, the \emph{weight operator}.  This is the linear operator $\V(M)\to\V(M)$ defined as having $\V_{\lambda}(M)$ as its $\lambda$-eigenspace for each $\lambda\in\RR$.  We turn our attention now to the Thomas construction which provides a link between densities and projective classes.

\subsection{The Thomas construction}
The object of Thomas' construction is to construct, a manifold $\tilde{M}$ for any manifold $M$ with the property that projective classes on $M$ can be canonically lifted to linear connections on $\tilde{M}$ (see \cite{Thomas2}, in which $\tilde{M}$ is denoted $^{*}\!M$).  The manifold $\tilde{M}$ is defined locally by attaching an additional coordinate to each chart on $M$ and specifying its coordinate changes.  If $x^{1},\dots,x^{n}$ are a local coordinate system on $M$, then let $x^{0}, x^{1},\dots, x^{n}$ be a coordinate system on $\tilde{M}$.  If under a coordinate change on $M$, $\bar{x}^{i} = f^{i}(x^{1},\dots,x^{n})$ then define $\bar{x}^{0} := x^{0} + \log J_{f}$ where $J_{f}$ is the absolute value of the determinant of the Jacobian matrix of $f$.  With this definition, a projective class $\Pi^{k}_{ij}$ gives rise to a linear connection $\tilde{\Gamma}^{\c}_{\a\b}$ on $\tilde{M}$ via the following formulae.  Here and throughout, Gothic indices range from $0$ to $n$ while Roman indices range from $1$ to $n$.
\[\tilde{\Gamma}^{k}_{ij} := \Pi^{k}_{ij},\quad \tilde{\Gamma}^{\a}_{0\b} = \tilde{\Gamma}^{\a}_{\b0} := \frac{-\delta^{\a}_{\b}}{n+1},\quad \tilde{\Gamma}^{0}_{ij} := \frac{n+1}{n-1}\left(\partial_{s}\Pi^{s}_{ij} - \Pi^{p}_{qi}\Pi^{q}_{pj}.\right)\]

Indeed, the manifold $\tilde{M}$ is a bundle over $M$ of rank $1$.  More specifically, it is the `oriented' frame bundle of the bundle of top forms on $M$.  It follows immediately that a density $\phi(x)|Dx|^{\lambda}$ on $M$ may be viewed as a function $\phi(x)e^{\lambda x^{0}}$ on $\tilde{M}$, the algebra $\V(M)$ as a subalgebra of $C^{\infty}(\tilde{M})$.  Through this correspondence, the weight operator $w$ can be interpreted as the tangent vector field $\frac{\partial}{\partial x^{0}}$ on $\tilde{M}$.

\subsection{Brackets and operators}
In \cite{oloii}, Khudaverdian and Voronov constructed from a bracket on $\V(M)$ a canonical generating operator subject to some natural conditions.  What follows is a brief account of this result and its relation to projective classes.

\begin{definition}\label{bracketdefn}
A \emph{bracket} on a unital commutative associative algebra  $A$ is a symmetric biderivation on $A$.  A bracket $\{\,\cdot\, , \,\cdot\,\}$ is said to be \emph{generated} by a differential operator $\Delta: A\to A$ if for all $a,b\in A$,
\[\{a,b\} = \Delta(ab) - a\Delta(b) - \Delta(a)b + ab\Delta(1).\]
A \emph{bracket} on a manifold $M$ is a bracket on $C^{\infty}(M)$.  A bracket on $\V(M)$ is said to have \emph{weight $\lambda$} if for $\phi\in\V_{\mu_{1}}(M)$, $\psi\in\V_{\mu_{2}}(M)$, $\{\phi,\psi\}\in\V_{\mu_{1} + \mu_{2} + \lambda}(M)$.
\end{definition}
For an operator $\Delta$ to generate a bracket, it must in fact be of second order in the algebraic sense.  Two second order operators generate the same bracket if and only if they differ by a first order operator on $A$.  Now the promised uniqueness result from \cite{oloii}.
\begin{theorem}\label{Koszul}
Let $A$ be endowed with an invariant symmetric scalar product $\langle\,\cdot\, ,\,\cdot\,\rangle$, i.e. $\langle a,bc\rangle = \langle ab,c\rangle$ $\forall\,a,b,c\in A$.  Then generating a given bracket, there is an unique operator $\Delta$ which is self adjoint with respect to the scalar product and such that $\Delta(1) = 0$.
\end{theorem}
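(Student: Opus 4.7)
The plan is to split the statement into existence and uniqueness, making essential use of the remark immediately preceding the theorem that two second-order operators generate the same bracket precisely when they differ by a first-order operator.

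\textbf{Existence.} Starting from any second-order generator $\Delta_{0}$ of the given bracket, I would form its formal adjoint $\Delta_{0}^{*}$, defined by $\langle \Delta_{0}^{*}a,b\rangle = \langle a,\Delta_{0}b\rangle$. The key technical observation is that $\Delta_{0}^{*}$ is again of order at most two and shares the principal symbol of $\Delta_{0}$: invariance of the pairing gives $[\Delta_{0}^{*},m_{a}] = -[\Delta_{0},m_{a}]^{*}$ for every multiplication operator $m_{a}:b\mapsto ab$, so iterating this identity twice, and using that multiplication operators are self-adjoint by invariance, shows that the characteristic double commutator is preserved under $*$. Consequently $\Delta_{0}^{*}-\Delta_{0}$ is first-order, and the symmetrisation $\Delta_{1}:=\tfrac12(\Delta_{0}+\Delta_{0}^{*})$ is self-adjoint and still generates the bracket. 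To arrange the normalisation, set $\Delta := \Delta_{1}-m_{g}$ with $g := \Delta_{1}(1)$; multiplication operators are self-adjoint by invariance and generate the zero bracket, so $\Delta$ retains both properties while now satisfying $\Delta(1)=0$.

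\textbf{Uniqueness.} If $\Delta$ and $\Delta'$ both generate the bracket and share the two extra properties, then $D:=\Delta-\Delta'$ is first-order, self-adjoint and kills $1$. Writing $D=X+m_{f}$ with $X$ a derivation, evaluation at $1$ forces $f=0$, so $D$ is itself a derivation. Combining the Leibniz rule with invariance, symmetry of the pairing and self-adjointness,
\[\langle D(ab),1\rangle = \langle D(a)b+aD(b),1\rangle = \langle D(a),b\rangle+\langle D(b),a\rangle = 2\langle D(a),b\rangle,\]
while self-adjointness alone gives $\langle D(ab),1\rangle = \langle ab,D(1)\rangle = 0$. Hence $\langle D(a),b\rangle=0$ identically, and non-degeneracy of the scalar product forces $D=0$.

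\textbf{Main obstacle.} The delicate point is the symbol argument in the existence step: one must be sure that passing to the formal adjoint neither raises the order of the operator nor alters the bracket it generates. In the concrete setting of $\V(M)$ this amounts to integration by parts, but in the abstract algebra framework one really has to unpack the algebraic definition of order via iterated commutators with multiplications and verify that invariance of the pairing interacts with those commutators in exactly the right way.
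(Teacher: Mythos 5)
The paper itself offers no proof of this theorem: it is quoted verbatim from Khudaverdian and Voronov's work (the reference \cite{oloii} cited just above the statement), so there is no in-paper argument to compare yours against. Judged on its own terms, your proof is correct and is essentially the standard argument for this result. The identity $[\Delta_{0}^{*},m_{a}]=-[\Delta_{0},m_{a}]^{*}$ (valid because invariance makes every $m_{a}$ self-adjoint) does show that triple commutators of $\Delta_{0}^{*}$ with multiplications vanish and that $[[\Delta_{0}^{*},m_{a}],m_{b}]=[[\Delta_{0},m_{a}],m_{b}]^{*}=m_{\{a,b\}}$, so $\Delta_{0}^{*}$ is again a second-order generator of the same bracket; the symmetrisation and the subtraction of $m_{\Delta_{1}(1)}$ then give existence. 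Your uniqueness computation is also sound: the difference is a self-adjoint derivation killing $1$, and the chain $2\langle D(a),b\rangle=\langle D(ab),1\rangle=\langle ab,D(1)\rangle=0$ finishes it off.

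Three points deserve to be made explicit rather than left implicit. First, your existence step starts from ``any second-order generator $\Delta_{0}$'', so you are really proving: \emph{if} the bracket admits a generating operator, then it admits a canonical one. That is how the theorem is used in the paper (the generator is exhibited explicitly in coordinates, as in formula (\ref{THcanonical})), but it is worth saying. Second, the formal adjoint $\Delta_{0}^{*}$ need not exist for an arbitrary operator on an arbitrary algebra with pairing; in the intended setting of differential operators on $\V(M)$ it is supplied by integration by parts, and you correctly flag this as the delicate point. Third, your uniqueness argument invokes non-degeneracy of the scalar product, which is not literally stated in the hypothesis; it must be read into the word ``scalar product'' (and it does hold for the pairing (\ref{scalarproduct}) on compactly supported densities), since otherwise uniqueness fails trivially. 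None of these is a gap in the mathematics, but a referee would want each acknowledged.
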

Although the theorem can viewed in a purely algebraic context, it has wide reaching ramifications when applied to algebras of functions and densities, the latter being the main thrust of \cite{oloii}.  A bracket on $M$ is nothing but a symmetric tensor with two upper indices by another name: $S^{ij}\partial_{i}f\partial_{j}g = \{f,g\}$.  Adopting this correspondence, the first part of Theorem \ref{square} could be rephrased as, \emph{`On a manifold endowed with a projective class, every bracket has a canonical generating operator'}.

A bracket on $\V(M)$ of weight $\lambda$ is specified by three quantities:
\begin{equation}\label{bracketcomponents}
\begin{split}
S^{ij}|Dx|^{\lambda} &= \{x^{i},x^{j}\}\\
\gamma^{i}|Dx|^{\lambda +1}& = \{x^{i}, |Dx|\}\\
\theta|Dx|^{\lambda + 2}& = \{|Dx|, |Dx|\}
\end{split}
\end{equation}
Here we assume without loss of generality that our bracket is homogeneous and of weight $\lambda$.  Each of the objects $\gamma^{i}$ and $\theta$ have interesting geometric interpretations - see \cite{oloii}.
The theorem guarantees that generating each bracket on $\V(M)$, there is an unique operator which is self adjoint with respect to the scalar product (\ref{scalarproduct}) and constant free.  Calculating explicitly in coordinates, if the bracket is given by (\ref{bracketcomponents}), the generating operator is
\begin{equation}\label{THcanonical}
\Delta = |Dx|^{\lambda}\left(S^{ij}\partial_{i}\partial_{j} + 2\gamma^{i}w\partial_{i} + \theta w^{2} + (\partial_{j}S^{ij} + (\lambda - 1)\gamma^{i})\partial_{i} + (\partial_{a}\gamma^{a} + (\lambda -1)\theta)w\right).
\end{equation}

\section{The Main result}\label{classical}
This section will be devoted mainly to the explanation and derivation of the following result.
\begin{theorem}\label{main}Let $M$ be a manifold of dimension $n>1$ equipped with a projective class and a tensor density $S^{ij}$ of weight $\lambda\neq\frac{n+2}{n+1},\frac{n+3}{n+1}$.  There is a canonical second order differential operator of weight $\lambda$ acting on densities extending $S^{ij}$.
\end{theorem}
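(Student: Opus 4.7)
The plan is to work on the Thomas bundle $\tilde{M}$, where the projective class lifts to a bona fide linear connection $\tilde{\Gamma}^{\c}_{\a\b}$, the algebra of densities $\V(M)$ embeds as a subalgebra of $C^{\infty}(\tilde{M})$ via $\phi(x)|Dx|^{\mu}\mapsto \phi(x)e^{\mu x^{0}}$, and the weight operator $w$ is identified with $\partial/\partial x^{0}$. In this picture a weight-$\lambda$ differential operator on $\V(M)$ is precisely an operator on $\tilde{M}$ that preserves the subalgebra $\V(M)\subset C^{\infty}(\tilde{M})$ and raises $\partial_{0}$-eigenvalues by $\lambda$.

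First I would extend the tensor density $S^{ij}$ on $M$ to a symmetric contravariant $2$-tensor $\tilde{S}^{\a\b}$ on $\tilde{M}$ by setting $\tilde{S}^{ij}$ equal to $S^{ij}$ multiplied by the power of $e^{x^{0}}$ needed to make the object tensorial under the Thomas transition $\bar{x}^{0}=x^{0}+\log J_{f}$. The additional components $\tilde{S}^{0i}$ and $\tilde{S}^{00}$ are not determined by $S^{ij}$ alone; these encode the freedom in the lift and are to be fixed canonically by demanding that $\tilde{S}^{\a\b}$ be $\partial_{0}$-homogeneous of the weight predicted by $\lambda$ and that the resulting second-order operator preserve the subalgebra $\V(M)$. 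In analogy with Theorem \ref{square}(ii) one expects the determined values of $\tilde{S}^{0i}$ and $\tilde{S}^{00}$ to be expressed in terms of $\partial_{j}S^{ij}$ and $S^{jk}\Pi^{i}_{jk}$.

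With $\tilde{S}^{\a\b}$ in hand, I would form the candidate operator
\[\tilde{\Delta} = \tilde{S}^{\a\b}\tilde{\nabla}_{\a}\tilde{\nabla}_{\b} + L,\]
where $L$ is a first-order operator on $\tilde{M}$. Expanding in coordinates using the explicit Thomas formulae for $\tilde{\Gamma}^{\c}_{\a\b}$, the requirement that $\tilde{\Delta}$ descend to $\V(M)$ means that applied to $\phi(x)e^{\mu x^{0}}$ the result must again be of the form $(\text{function of }x)e^{(\mu+\lambda)x^{0}}$ for every $\mu$. Matching coefficients of the various powers of $\partial_{0}$ yields a linear system in the unknown components of $\tilde{S}^{0\a}$ and in the coefficients of $L$, with entries depending polynomially on $n$ and $\lambda$. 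The operator that finally descends will take the shape of (\ref{THcanonical}), with $\gamma^{i}$ and $\theta$ constructed canonically from $S^{ij}$ and $\Pi^{k}_{ij}$.

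The main obstacle is solving this linear system: existence and uniqueness of the canonical extension hinge on the invertibility of a matrix whose determinant is a polynomial in $\lambda$ with coefficients depending on $n$. The excluded weights $\lambda=\frac{n+2}{n+1},\frac{n+3}{n+1}$ will appear precisely as the zeros of this determinant, in direct analogy with the $n+3$ denominator in Theorem \ref{square}(i); away from these resonances the lift and the first-order correction $L$ are uniquely pinned down. As a sanity check, setting $\lambda=0$ must recover the projective Laplacian of Theorem \ref{square}(i), now extended from $C^{\infty}(M)$ to the whole of $\V(M)$.
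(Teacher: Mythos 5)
Your setup is the same as the paper's: pass to the Thomas bundle $\tilde{M}$, realise $\V(M)$ inside $C^{\infty}(\tilde{M})$, lift $S^{ij}$ to a symmetric two-index object $\tilde{S}^{\a\b}$ with undetermined components $\tilde{S}^{0i}=e^{\lambda x^{0}}\gamma^{i}$ and $\tilde{S}^{00}=e^{\lambda x^{0}}\theta$, and build a second order operator from the induced connection. The gap lies in the mechanism you propose for fixing $\gamma^{i}$, $\theta$ and the first order correction $L$. Homogeneity of weight $\lambda$ together with preservation of the subalgebra $\V(M)\subset C^{\infty}(\tilde{M})$ does not produce a determined linear system: any operator whose coefficients are homogeneous (proportional to $e^{\lambda x^{0}}$ times functions of $x$ alone) automatically sends $\phi(x)e^{\mu x^{0}}$ to something of the form $\psi(x)e^{(\mu+\lambda)x^{0}}$, so \emph{every} homogeneous choice of $\gamma^{i}$, $\theta$ and $L$ passes your test. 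The matrix whose determinant you expect to vanish at the resonant weights never materialises from these conditions; the construction as you describe it has an affine family of outputs rather than a canonical one.

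The missing ingredient is the canonical scalar product (\ref{scalarproduct}) on $\V(M)$ together with the uniqueness theorem of Khudaverdian and Voronov (Theorem \ref{Koszul}). The paper first applies Theorem \ref{square} to the induced projective class $\tilde{\Pi}^{\a}_{\b\c}$ on $\tilde{M}$ (not directly to the lifted linear connection, though that is a minor difference), obtaining for each choice of $\gamma^{i},\theta$ the well defined operator (\ref{nasty}); it then demands that this operator be self adjoint with respect to (\ref{scalarproduct}). Since the operator already annihilates constants, Theorem \ref{Koszul} forces it to coincide with the explicit canonical generating operator (\ref{THcanonical}), and equating coefficients yields the closed formulae (\ref{gammathetaformulae}) expressing $\gamma^{i}$ and $\theta$ in terms of $\partial_{j}S^{ij}$, $S^{jk}\Pi^{i}_{jk}$ and $\partial_{s}\Pi^{s}_{ij}-\Pi^{p}_{qi}\Pi^{q}_{pj}$. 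The denominators $n+3-\lambda(n+1)$ and $n+2-\lambda(n+1)$ appearing there are exactly the source of the excluded weights $\lambda=\frac{n+3}{n+1},\frac{n+2}{n+1}$. Your sanity check at $\lambda=0$ is correct, but without the self-adjointness condition there is no canonical operator to check it against.
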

By \emph{tensor density of weight $\lambda$}, we refer to an object $S^{ij}$ such that $S^{ij}|Dx|^{\lambda}$ is a tensor.  The operator constructed above `extends' $S^{ij}$ in the sense that it is equal to $|Dx|^{\lambda}S^{ij}\partial_{i}\partial_{j} + \mbox{other terms}$. Theorem \ref{main} immediately implies the more palatable corollary:
\begin{corollary}\label{bracketextension}
Let $M$ be a manifold of dimension $n>1$ equipped with a projective class.  Then every bracket on $M$ can be extended canonically to a bracket on $\V(M)$.
\end{corollary}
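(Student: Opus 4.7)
The plan is to reduce the corollary to Theorem \ref{main} by passing back and forth between brackets and their generating second order operators, using the algebraic machinery recalled in the preliminaries.

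First I would translate the data. A bracket $\{\,\cdot\,,\,\cdot\,\}$ on $M$ is a symmetric biderivation of $C^\infty(M)$ and thus is encoded by a symmetric contravariant $2$-tensor $S^{ij}$ via $\{f,g\} = S^{ij}\partial_i f\,\partial_j g$. In the terminology of Theorem \ref{main}, this $S^{ij}$ is a tensor density of weight $\lambda = 0$. Because the dimension satisfies $n>1$, the excluded weights $\frac{n+2}{n+1}$ and $\frac{n+3}{n+1}$ are strictly positive, so the hypothesis $\lambda\neq\frac{n+2}{n+1},\frac{n+3}{n+1}$ is satisfied.

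Next I would apply Theorem \ref{main} to $S^{ij}$ to obtain a canonical second order differential operator $\Delta$ of weight $0$ on $\V(M)$ whose principal part, evaluated on functions, is $S^{ij}\partial_i\partial_j$. I then define a candidate bracket on $\V(M)$ via the algebraic formula of Definition \ref{bracketdefn}:
\[\{\phi,\psi\}_{\V} := \Delta(\phi\psi) - \phi\Delta(\psi) - \Delta(\phi)\psi + \phi\psi\,\Delta(1).\]
Since $\Delta$ is of algebraic second order, this formula automatically produces a symmetric biderivation on $\V(M)$; weight $0$ of the bracket follows from weight $0$ of $\Delta$.

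It remains to check that restricting $\{\,\cdot\,,\,\cdot\,\}_\V$ to $C^\infty(M)=\V_0(M)$ recovers the original bracket (up to the standard convention factor). This is a direct computation: for $f,g\in C^\infty(M)$, the first order and zero order parts of $\Delta$ cancel in the expression above, and only the principal symbol $S^{ij}$ survives, reproducing $\{f,g\}$ as desired. Canonicity of the extension is inherited from canonicity of $\Delta$ in Theorem \ref{main}, together with the uniqueness statement of Theorem \ref{Koszul} (one may, if preferred, single out the unique self-adjoint constant-free representative of $\Delta$ modulo first order operators before applying the generating formula).

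The only real content here is Theorem \ref{main} itself; given that result as a black box, the corollary is a short dictionary translation between symmetric biderivations and their generators. The main obstacle, therefore, is not in this argument at all but rather in the construction of $\Delta$ carried out in the proof of Theorem \ref{main}; here one need only be careful that the weight condition is genuinely satisfied (which it is, comfortably, at $\lambda=0$) and that the second order generator indeed restricts to the given $S^{ij}$ on $C^\infty(M)$.
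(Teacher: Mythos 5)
Your proposal is correct and follows essentially the same route as the paper: the paper's proof likewise sets $\lambda=0$, applies Theorem \ref{main} to the tensor $S^{ij}$ encoding the bracket, and lets the resulting operator generate the extended bracket on $\V(M)$. Your version merely spells out the verification that the generated bracket restricts to the original one on $C^{\infty}(M)$, which the paper leaves implicit.
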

\begin{proof}
Taking $\lambda = 0$, the tensor density $S^{ij}$ in the hypothesis of Theorem \ref{main} becomes a bracket on $M$.  The operator given by Theorem \ref{main} then generates a bracket on $\V(M)$ which extends the bracket $\{f,g\} = S^{ij}\partial_{i}f\partial_{j}g$.
\end{proof}
Let us take the first steps toward proving Theorem \ref{main}. In the previous section, an explicit formula for the canonical operator generating a given bracket on $\V(M)$ was given in terms of the bracket coefficients.  Regarding $\V(M)$ as a subalgebra of $C^{\infty}(\tilde{M})$, a bracket on $\V(M)$ is a bracket on $\tilde{M}$.  As mentioned, brackets on $M$ and symmetric tensors with two upper indices on $M$ are synonymous.  The same is true on $\tilde{M}$ and so a bracket on $\V(M)$ defines a tensor $\tilde{S}^{\a\b}$ on $\tilde{M}$.  Given a bracket defined by (\ref{bracketcomponents}), the tensor $\tilde{S}^{\a\b}$ decomposes as
\begin{equation}\label{d+1}\tilde{S}^{ij} = e^{\lambda x^{0}}S^{ij},\quad \tilde{S}^{0i} = \tilde{S}^{i0} = e^{\lambda x^{0}}\gamma^{i},\quad \tilde{S}^{00} = e^{\lambda x^{0}}\theta.\end{equation}
Suppose now that $\tilde{M}$ is endowed with a projective class $\tilde{\Pi}^{\a}_{\b\c}$ and a bracket given by (\ref{bracketcomponents}).  Then applying Theorem \ref{square} to $\tilde{M}$ yields a differential operator acting on densities:

\begin{equation}\label{densitysquare}\tilde{\Delta} = \tilde{S}^{\a\b}\partial_{\a}\partial_{b} + \left(\frac{2}{n+4}\partial_{\b}\tilde{S}^{\a\b} - \frac{n+2}{n+4}\tilde{S}^{\b\c}\tilde{\Pi}^{\a}_{\b\c}\right)\partial_{\a}.\end{equation}

Indeed, for the construction of $\tilde{\Delta}$, it is enough for $M$ to be equipped with a projective class since:
\begin{proposition}A projective class $\Pi^{k}_{ij}$ on $M$ induces a projective class $\tilde{\Pi}^{\a}_{\b\c}$ on $\tilde{M}$ defined as
\begin{equation}\label{inducedprojectiveclass}
\begin{split}
\tilde{\Pi}^{k}_{ij} := \Pi^{k}_{ij}, \quad \tilde{\Pi}^{k}_{i0} = \tilde{\Pi}^{k}_{0i} := \frac{-\delta^{k}_{i}}{(n+1)(n+2)},\quad \tilde{\Pi}^{0}_{i0} = \tilde{\Pi}^{0}_{0i} := 0,\\ \tilde{\Pi}^{0}_{ij} := \frac{n+1}{n-1}\left(\partial_{s}\Pi^{s}_{ij} - \Pi^{p}_{qi}\Pi^{q}_{pj}\right),\quad
\tilde{\Pi}^{k}_{00} := 0,\quad \tilde{\Pi}^{0}_{00} := \frac{n}{(n+1)(n+2)}.
\end{split}
\end{equation}
\end{proposition}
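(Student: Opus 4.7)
The plan is to reduce the proof to a direct calculation exploiting the Thomas construction recalled above. Since $\tilde M$ is an $(n+1)$-dimensional manifold, a projective class on it is determined by a torsion-free linear connection $\tilde\Gamma^{\c}_{\a\b}$ via the trace-free combination
\begin{equation*}
\tilde\Pi^{\c}_{\a\b} \;=\; \tilde\Gamma^{\c}_{\a\b} - \frac{1}{n+2}\bigl(\delta^{\c}_{\a}\tilde\Gamma^{\s}_{\s\b} + \delta^{\c}_{\b}\tilde\Gamma^{\s}_{\a\s}\bigr).
\end{equation*}
I would take as input the Thomas connection on $\tilde M$ determined by the projective class $\Pi^{k}_{ij}$, namely the $\tilde\Gamma^{\c}_{\a\b}$ written down in the preliminaries section. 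The key observation is that this assignment depends only on the projective class on $M$, so once we show the right-hand side of the displayed formulae equals the trace-free part of this Thomas connection, we will automatically have produced a well-defined projective class on $\tilde M$.

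The first step is to compute the two partial traces $\tilde\Gamma^{\s}_{\s\b}$ and $\tilde\Gamma^{\s}_{\b\s}$ of the Thomas connection. These simplify dramatically: using the standard fact that $\Pi^{s}_{sj}=0$ for any (torsion-free) representative of the projective class, one finds $\tilde\Gamma^{\s}_{\s j} = 0$, while summing the prescribed values $\tilde\Gamma^{\a}_{0\b} = -\delta^{\a}_{\b}/(n+1)$ gives $\tilde\Gamma^{\s}_{\s 0} = -1$. Torsion-freeness (evident from the symmetry in $\a,\b$ of each piece of $\tilde\Gamma$) ensures the other trace agrees.

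The second step is to plug these traces into the trace-free formula above for each pattern of indices in $\{0,\ldots,n\}$: the six cases $(ij,k)$, $(ij,0)$, $(i0,k)$, $(i0,0)$, $(00,k)$, $(00,0)$ exhaust the possibilities up to the symmetry $\tilde\Pi^{\c}_{\a\b} = \tilde\Pi^{\c}_{\b\a}$. In each case the calculation is entirely elementary arithmetic with the constants $1/(n+1)$ and $1/(n+2)$; for example $\tilde\Pi^{k}_{i0}$ reduces to $\delta^{k}_{i}\bigl(-\tfrac{1}{n+1}+\tfrac{1}{n+2}\bigr) = -\delta^{k}_{i}/((n+1)(n+2))$, and $\tilde\Pi^{0}_{00}$ becomes $-\tfrac{1}{n+1}+\tfrac{2}{n+2} = \tfrac{n}{(n+1)(n+2)}$. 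Matching each computed value against the formulae stated in the proposition completes the proof.

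There is no real obstacle beyond careful bookkeeping: the main risk is sign errors or confusion between the dimension of $M$ (which enters $\Pi$ via $1/(n+1)$) and the dimension of $\tilde M$ (which enters $\tilde\Pi$ via $1/(n+2)$). Since the Thomas connection itself is known to depend only on the projective equivalence class on $M$, no additional well-definedness argument is needed.
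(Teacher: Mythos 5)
Your proposal is correct and follows exactly the paper's own route: the paper's proof simply says that the formulae arise by taking the projective class (trace-free part with the $1/(n+2)$ normalisation appropriate to $\dim\tilde M = n+1$) of the Thomas connection on $\tilde M$, and your explicit trace computations and case-by-case verification fill in precisely that calculation.
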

\begin{proof}Using the Thomas construction, the projective class $\Pi^{k}_{ij}$ on $M$ induces a linear connection on $\tilde{M}$.  The above expressions are given by taking the projective class of this connection.
\end{proof}
Substituting (\ref{d+1}) and (\ref{inducedprojectiveclass}) into (\ref{densitysquare}), the argument thus far can be summarized as:
\begin{lemma}
Let $M$ be a manifold endowed with a projective class and a bracket on $\V(M)$.  Then there is a canonical second order differential operator acting on densities generating the bracket.  If the bracket is given by (\ref{bracketcomponents}), the operator is given as
\begin{equation}\label{nasty}
\begin{split}
\tilde{\Delta} = e^{\lambda x^{0}}\left(S^{ij}\partial_{a}\partial_{b} + 2\gamma^{i}\partial_{j}\partial_{0} + \theta\partial_{0}^{2} + \left(\frac{2}{n+4}\partial_{j}S^{ij} + \frac{2(\lambda + n\lambda + 1)}{(n+1)(n+4)}\gamma^{i} -\frac{n+2}{n+4}S^{jk}\Pi^{i}_{jk}\right)\partial_{i}\right.\\\left. + \left(\frac{2}{n+4}\partial_{k}\gamma^{k} + \frac{2\lambda + 2\lambda n - n}{(n+1)(n+4)}\theta - \frac{(n+1)(n+2)}{(n-1)(n+4)}S^{ij}\left(\partial_{s}\Pi^{s}_{ij} - \Pi^{p}_{qi}\Pi^{q}_{pj}\right)\right)\partial_{0}\right).\end{split}\end{equation}
\end{lemma}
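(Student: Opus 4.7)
The plan is to concatenate structures already assembled. A bracket on $\V(M)$ extends canonically to a bracket on $C^\infty(\tilde M)$ via the identification $|Dx| = e^{x^0}$ and the biderivation property: the prescribed values $\{x^i,x^j\}$, $\{x^i,|Dx|\}$, $\{|Dx|,|Dx|\}$ from (\ref{bracketcomponents}) determine all $\{x^\a,x^\b\}$, and a short Leibniz-rule check yields the decomposition (\ref{d+1}). The Thomas construction, together with the preceding proposition, equips $\tilde M$ with the projective class $\tilde\Pi^\a_{\b\c}$ of (\ref{inducedprojectiveclass}). Applying the first part of Theorem \ref{square} to $\tilde M$---noting that $\dim\tilde M = n+1$, so the universal constants $\tfrac{2}{n+3}$ and $\tfrac{n+1}{n+3}$ become $\tfrac{2}{n+4}$ and $\tfrac{n+2}{n+4}$---then delivers the generating operator $\tilde\Delta$ of (\ref{densitysquare}).

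Before substituting I would verify that $\tilde\Delta$ preserves $\V(M) \subset C^\infty(\tilde M)$. Under the shift $x^0 \mapsto x^0 + c$, $\tilde\Pi^\a_{\b\c}$ is invariant while $\tilde S^{\a\b}$ scales by $e^{\lambda c}$, so $\tilde\Delta$ is homogeneous of weight $\lambda$ and hence sends $\V_\mu(M)$ into $\V_{\mu+\lambda}(M)$. This settles the qualitative claim; only the explicit formula remains.

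To obtain (\ref{nasty}) I would split every Gothic index into its Roman ($1,\dots,n$) and zeroth parts and substitute. The quadratic term $\tilde S^{\a\b}\partial_\a\partial_\b$ gives the leading $e^{\lambda x^0}(S^{ij}\partial_i\partial_j + 2\gamma^i\partial_0\partial_i + \theta\partial_0^2)$ directly. In the divergence-like term $\tfrac{2}{n+4}\partial_\b\tilde S^{\a\b}$, $\partial_0$ hitting the prefactor $e^{\lambda x^0}$ contributes a factor of $\lambda$, producing the $\lambda\gamma^i$ and $\lambda\theta$ summands in the first-order coefficients. In the Christoffel-like term $\tfrac{n+2}{n+4}\tilde S^{\b\c}\tilde\Pi^\a_{\b\c}$, sparsity of $\tilde\Pi^\a_{\b\c}$ kills most combinations: only $\tilde\Pi^k_{ij}$, $\tilde\Pi^k_{0j}$, $\tilde\Pi^0_{ij}$ and $\tilde\Pi^0_{00}$ survive, contributing respectively the $S^{jk}\Pi^i_{jk}$ term, a further $\gamma^i$ term, the curvature-like $S^{ij}(\partial_s\Pi^s_{ij} - \Pi^p_{qi}\Pi^q_{pj})$ term, and a further $\theta$ term.

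The hard part is then purely arithmetic bookkeeping: the $\gamma^i$-coefficient of $\partial_i$ and the $\theta$-coefficient of $\partial_0$ each receive contributions from both the divergence-like term and the sparse $\tilde\Pi^k_{0j}$, $\tilde\Pi^0_{00}$ pieces of the Christoffel-like term, and combining these over the common denominator $(n+1)(n+4)$ produces the numerators $2(\lambda + n\lambda + 1)$ and $2\lambda + 2\lambda n - n$ that appear in (\ref{nasty}); the $\tilde\Pi^0_{ij}$ contribution similarly multiplies the curvature-like expression by $-\tfrac{(n+1)(n+2)}{(n-1)(n+4)}$, completing the formula.
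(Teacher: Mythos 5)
Your proposal is correct and follows essentially the same route as the paper: identify the bracket on $\V(M)$ with a symmetric tensor $\tilde{S}^{\a\b}$ on $\tilde{M}$ via (\ref{d+1}), induce the projective class (\ref{inducedprojectiveclass}) through the Thomas construction, apply Theorem \ref{square} in dimension $n+1$ to obtain (\ref{densitysquare}), and substitute; your arithmetic bookkeeping for the $\gamma^{i}$ and $\theta$ coefficients checks out. The only addition beyond the paper's argument is your explicit homogeneity check that $\tilde{\Delta}$ preserves $\V(M)\subset C^{\infty}(\tilde{M})$, which the paper leaves implicit.
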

Note that in the above expression, $\partial_{0}$ is the weight operator defined above.  In the style of \cite{oloii}, $\tilde{\Delta}$ can be equivalently interpreted as a \emph{pencil} of operators $\tilde{\Delta}_{\mu}:\V_{\mu}(M)\to\V_{\mu+\lambda}(M)$ for $\mu\in\RR$ given by
\begin{equation*}\begin{split}
\tilde{\Delta}_{\mu} = e^{\lambda x^{0}}\left(S^{ij}\partial_{i}\partial_{j}  + \left(\frac{2}{n+4}\partial_{j}S^{ij} + \left(\frac{2(\lambda + n\lambda + 1)}{(n+1)(n+4)} + 2\mu\right)\gamma^{i} -\frac{n+2}{n+4}S^{jk}\Pi^{i}_{jk}\right)\partial_{i}\right.\\\left. + \frac{2\mu}{n+4}\partial_{k}\gamma^{k} + \mu\left(\frac{2\lambda + 2\lambda n - n}{(n+1)(n+4)} + \mu\right)\theta - \frac{\mu(n+1)(n+2)}{(n-1)(n+4)}S^{ij}\left(\partial_{s}\Pi^{s}_{ij} - \Pi^{p}_{qi}\Pi^{q}_{pj}\right)\right).\end{split}
\end{equation*}

Let us impose now the condition that $\tilde{\Delta}$ is self adjoint with respect to the scalar product (\ref{scalarproduct}).  Since it is already constant free, by Theorem \ref{Koszul} $\tilde{\Delta}$ must then be equal to the canonical operator (\ref{THcanonical}).  Equating coefficients,
\begin{equation}\label{gammathetaformulae}
\begin{split}
\gamma^{i} &= \frac{n+1}{n+3 - \lambda(n+1)}\left(\partial_{j}S^{ij} + S^{jk}\Pi^{i}_{jk}\right)\\
\theta &= \frac{n+1}{n+2 - \lambda(n+1)}\left(\partial_{s}\gamma^{s} + \frac{n+1}{n-1}S^{ij}\left(\partial_{s}\Pi^{s}_{ij} - \Pi^{p}_{qi}\Pi^{q}_{pj}\right)\right).
\end{split}
\end{equation}
Having obtained these expressions, they can be used to \emph{define} $\gamma^{i}$ and $\theta$.  Substituting (\ref{gammathetaformulae}) into (\ref{nasty}) gives Theorem \ref{main}.  Setting $\lambda = 0$ and substituting instead into (\ref{bracketcomponents}) proves Corollary \ref{bracketextension} directly without reference to Theorem \ref{main}.
\begin{remarks}
\begin{enumerate}[i)]
\item The operator is not well defined for the cases $\lambda = \frac{n+3}{n+1}$, $\frac{n+2}{n+1}$.  These cases are referred to by Ovsienko and Bouarroudj as \emph{resonant} - see \cite{Ovsienko,Bouarroudj1}.
\item As one would expect, if the bracket in the hypothesis of Theorem \ref{main} is a bracket on $M$, then we recover Theorem \ref{square}
\item In \cite{oloii} it was noted that in the case $\lambda = 0$, $\gamma^{i}$ defines an upper connection over $S^{ij}$ in the bundle of volume forms.  Indeed, setting $\lambda = 0$ in (\ref{gammathetaformulae}) recovers the expression given in Theorem \ref{square}.
\end{enumerate}
\end{remarks}
\subsection*{Further Remarks}
\subsubsection*{Iterated Thomas construction}

Consider the Jacobian matrix corresponding to a change of coordinates on $\MM$.  This is of the form
\[\left(\begin{array}{c|c}1&\frac{\partial\log J}{\partial x^{j}}\\\hline0&\frac{\partial \bar{x}^{i}}{\partial x^{j}}
\end{array}\right).\]
The modulus of the determinant of this matrix is therefore $\tilde{J} = J$.  It follows then that densities on $M$ are also densities on $\MM$ via the inclusion $\phi(x)|Dx|^{\lambda} \mapsto \phi(x)|\widetilde{Dx}|^{\lambda}\in\V(\MM)$ where $\widetilde{Dx}$ is the volume element on $\MM$.  All the above constructions can then be iterated, replacing $M$ with $\MM$ and $\MM$ with $\MMM$.  Continuing this process, Theorem \ref{main} and Corollary \ref{bracketextension} are the first in a family of results indexed by the natural numbers.

\subsubsection*{A special case}

In addition let us make an observation in the special case that $M$ is endowed with a density $\rho(x)|Dx|^{\sigma}$.  Then the canonical scalar product can be modified by defining
\begin{equation}\label{shiftedscalarproduct}
\langle\phi,\psi\rangle_{\rho,\sigma} = \left\{\begin{array}{cc}\int_{M}\phi\psi\rho& \mbox{if }\lambda+\mu + \sigma = 1;\\0&\mbox{otherwise.}\end{array}\right.
\end{equation}
Theorem \ref{Koszul} can be applied to $\V(M)$ with $\langle\phi,\psi\rangle_{\rho,\sigma}$ as the scalar product.  Turning the same handle, we obtain
\begin{theorem}Let $M$ be a manifold with a projective class $\Pi^{k}_{ij}$ a tensor density $S^{ij}$ of weight $\lambda$.  Then there is a family of second order differential operators $\Delta_{\rho,\sigma}$ extending $S^{ij}$ indexed by densities $\rho(x)|Dx|^{\sigma}$.
\end{theorem}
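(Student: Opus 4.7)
The plan is to replay verbatim the derivation that produced Theorem \ref{main}, with the sole change being the replacement of the canonical scalar product on $\V(M)$ by the shifted pairing $\langle\cdot,\cdot\rangle_{\rho,\sigma}$ of (\ref{shiftedscalarproduct}). The first half of the argument does not see the scalar product at all: as in (\ref{d+1}) one lifts $S^{ij}$ to a tensor $\tilde{S}^{\a\b}$ on $\tilde{M}$ by choosing auxiliary coefficients $\gamma^{i}$ and $\theta$, applies Theorem \ref{square} on $\tilde{M}$ with the induced projective class (\ref{inducedprojectiveclass}), and obtains the same operator (\ref{nasty}), now viewed as depending on the still-unknown $\gamma^{i}$ and $\theta$. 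These two quantities will once again be forced by a self-adjointness requirement, and they alone carry the $\rho,\sigma$-dependence.

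The next step is to derive, for the modified pairing, the analogue of the canonical form (\ref{THcanonical}). The useful observation is that $\langle\phi,\psi\rangle_{\rho,\sigma}=\langle\rho\phi,\psi\rangle=\langle\phi,\rho\psi\rangle$, so a weight-$\lambda$ operator $D$ is self-adjoint with respect to $\langle\cdot,\cdot\rangle_{\rho,\sigma}$ precisely when the conjugated operator $\rho^{1/2}D\rho^{-1/2}$ (interpreted as a pencil of operators of the appropriate shifted weights) is self-adjoint with respect to $\langle\cdot,\cdot\rangle$. Combining this gauge identification with Theorem \ref{Koszul} produces the canonical constant-free self-adjoint generator for $\langle\cdot,\cdot\rangle_{\rho,\sigma}$; its coefficients differ from those of (\ref{THcanonical}) by correction terms linear in $\partial_{i}\log\rho$ and $\sigma$, together with an analogous $\partial_{0}$ correction (which, under the identification of $\partial_{0}$ with the weight operator, amounts to replacing certain $\lambda$-shifts by $\lambda$-plus-$\sigma$ shifts).

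Having both forms of $\tilde\Delta$ in hand, one equates coefficients of $\partial_{i}$ and $\partial_{0}$ as in the passage leading to (\ref{gammathetaformulae}). This yields $(\rho,\sigma)$-dependent expressions
\[\gamma^{i}=\gamma^{i}(S,\Pi,\lambda,\rho,\sigma),\qquad \theta=\theta(S,\Pi,\lambda,\rho,\sigma),\]
reducing to (\ref{gammathetaformulae}) when $\rho\equiv 1$ and $\sigma=0$. Using these formulas to \emph{define} $\gamma^{i}$ and $\theta$ and substituting back into (\ref{nasty}) produces, for each pair $(\rho,\sigma)$, a canonical second order operator $\Delta_{\rho,\sigma}$ extending $S^{ij}$.

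The technical obstacle is concentrated in the second step: writing down the modified canonical form correctly. One must track how the density $\rho$ interacts with the weight operator $w=\partial_{0}$ (since conjugation by $\rho^{1/2}$ shifts weights), handle the logarithmic derivatives of $\rho$ that appear when commuting $\rho$ past $S^{ij}\partial_{i}\partial_{j}$ and $\gamma^{i}\partial_{i}$, and verify that the resulting shifted linear system in $\gamma^{i},\theta$ is nondegenerate (generalising the nonresonance condition on $\lambda$ to a condition involving $\lambda$ and $\sigma$). Once that bookkeeping is done, the remaining substitutions are mechanical and produce the claimed family.
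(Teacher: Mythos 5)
Your proposal is correct and follows essentially the same route as the paper: the paper's own ``proof'' consists precisely of applying Theorem \ref{Koszul} with the shifted scalar product (\ref{shiftedscalarproduct}) and ``turning the same handle'' as in the derivation of Theorem \ref{main}, i.e.\ keeping the operator (\ref{nasty}) from the Thomas construction, imposing self-adjointness for the new pairing, and solving for $\gamma^{i}$ and $\theta$ to obtain (\ref{gammathetaformulaevolume}). Your conjugation-by-$\rho^{1/2}$ bookkeeping is a reasonable way to carry out the computation the paper leaves implicit, and you correctly anticipate both the $\partial_{i}\log\rho$ corrections and the shifted resonance condition in $\lambda$ and $\sigma$ visible in the denominators of (\ref{gammathetaformulaevolume}).
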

The formula is
\begin{equation}\label{nastier}\begin{split}
\Delta_{\rho,\sigma} = e^{\lambda x^{0}}\left(S^{i}_{j}\partial_{a}\partial_{b} + 2\gamma^{i}\partial_{j}\partial_{0} + \theta\partial_{0}^{2} + \left(\frac{2}{n+4}\partial_{j}S^{ij} + \frac{2(\lambda + n\lambda + 1)}{(n+1)(n+4)}\gamma^{i} -\frac{n+2}{n+4}S^{jk}\Pi^{i}_{jk}\right)\partial_{i}\right.\\\left. + \left(\frac{2}{n+4}\partial_{k}\gamma^{k} + \frac{2\lambda + 2\lambda n - n}{(n+1)(n+4)}\theta - \frac{(n+1)(n+2)}{(n-1)(n+4)}S^{ij}\left(\partial_{s}\Pi^{s}_{ij} - \Pi^{p}_{qi}\Pi^{q}_{pj}\right)\right)\partial_{0}\right).\end{split}
\end{equation}
where now
\begin{equation}\label{gammathetaformulaevolume}
\begin{split}
\gamma^{i} &= \frac{n+1}{n+3 - (\lambda + \frac{n+4}{n+2}\sigma)(n+1)}\left(\partial_{j}S^{ij} + S^{jk}\Pi^{i}_{jk} + \frac{n+4}{n+2}S^{ij}\partial_{j}\log\rho\right)\\
\theta &= \frac{n+1}{n+2 - (\lambda + \frac{n+4}{n+2}\sigma)(n+1)}\left(\partial_{s}\gamma^{s} + \frac{n+1}{n-1}S^{ij}\left(\partial_{s}\Pi^{s}_{ij} - \Pi^{p}_{qi}\Pi^{q}_{pj} \right)+ \frac{n+4}{n+2}\gamma^{s}\partial_{s}\log\rho\right).
\end{split}
\end{equation}

The existence of global densities $\rho(x)|Dx|^{\sigma}$ of non-zero weight is perhaps less restrictive than might be imagined.  Probing the bundle structure of $\tilde{M}\to M$ more precisely, it is clear that $\tilde{M}$ is a trivial $\RR_{+}$ bundle over $M$, where $\RR_{+}$ denotes the additive group of real numbers.  As such, there is some global section $M\to\tilde{M} = M\times \RR$ taking $x \mapsto (x,s(x))$.  Therefore we can consider in local coordinates on $M$ the densities $\rho = e^{\sigma s(x^{1},\dots,x^{n})}$ for arbitrary $\sigma\in\RR$.

Given a density as above, one can of course construct a $1$-density by raising to the appropriate power.  It was noted in \cite{oloii} that given such a density, $\rho$, the quantities $\gamma_{i} = -\partial_{i}\log\rho$ define a (flat) connection in the volume bundle.  The whole bracket on densities can be constructed in this case via $\gamma^{i} := S^{ij}\gamma_{i}$ and $\theta := \gamma^{i}\gamma_{i}$.  The bracket constructed is distinct from the bracket given above.

\subsubsection*{Projectively equivariant quantisation}

Theorem \ref{main} assigns to each tensor density $S^{ij}$ of weight $\lambda$ a second order differential operator acting on densities.  Such a procedure fits into the general programme dubbed \emph{projectively equivariant quantisation} by Lecomte and Ovsienko in their seminal paper \cite{Ovsienko}.  By a quantisation on $\RR^{n}$, we refer to a bijection from the space of tensor densities $S^{a_{1}\dots a_{k}}$ (or `symbols'), to the space of differential operators on $C^{\infty}(\RR)$ which preserves the principal symbol.  Requiring this bijection to be equivariant with respect to the action of $\mathrm{Diff(\RR^{n})}$ is too stringent and no such map exists.  However, restricting to the action of the projective group is more reasonable.  Indeed, in \cite{Ovsienko}, it is proved that not only does such a map exist, it is also unique.

The authors of \cite{Ovsienko} speculated on the possibility of a generalisation of this result in the case of manifolds endowed with a projective class.  Bordemann answered this question in \cite{Bordemann}, producing such a quantisation.  Bouarroudj produced explicit formulae for projectively equivariant quantisations in degree two and three (\cite{Bouarroudj1, Bouarroudj2}).  Later Mathonet and Radoux constructed an equivariant quantisation from a Cartan projective connection (\cite{MathonetRadoux1, MathonetRadoux2}) before Radoux showed (\cite{Radoux1}) that in this more general setup, there are many possible projectively equivariant quantisations.

It is most natural to compare the result here to the explicit formula in \cite{Bouarroudj1}.  In the case $\lambda = 0$, the operators agree, as pointed out to me by S. Bouarroudj.  However, in the general case, the operators disagree despite having the same resonant values for $\lambda$ (see \cite{Bouarroudj1} for more details).

\section{Further Discussion}
Given the results contained here, there are many avenues for further interest.  We give a few that immediately spring to mind.

\subsubsection*{Vector bundles}The projective connection coefficients $\Phi^{a}_{ib}$ defined in \S\ref{prelim} are defined for arbitrary vector bundles.  In the case of the tangent bundle, Theorem \ref{square} can be modified to give a projective Laplacian defined in terms of $\Phi^{k}_{ij}$ rather than $\Pi^{k}_{ij}$.  One avenue for further study is the generalisation of the results given here to arbitrary vector bundles.  This would involve an appropriate of modification of Thomas' construction, replacing $\tilde{M}$ with some other bundle.
\subsubsection*{Projectively equivariant quantisation} Although the notion of projectively equivariant quantisation has been alluded to, Theorem \ref{main} does not define a bona fide example, only giving the quantisation map in the case of second order symbols and operators.  Higher order analogues of Theorems \ref{square} and \ref{Koszul} apart from being interesting in their own right, would no doubt prove fruitful in this regard.
\subsubsection*{Poisson manifolds}  All brackets considered here are symmetric.  This immediately excludes the possibility of applying the constructions to Poisson manifolds and other antisymmetric structures.  On a supermanifold however, any odd symmetric bracket gives rise to an odd Poisson bracket canonically.  To study this situation, projective classes, the projective Laplacian and the Thomas construction would have to be replicated for supermanifolds.  This is not only possible, but is the subject for a text in preparation.
\subsection*{Acknowledgements:}  This work would not have been possible without countless discussions with my patient teachers Hovhannes Khudaverdian and Ted Voronov and their initial suggestion to investigate projective connections and differential operators.  A great deal of credit is also due to O. Little, R. Djabri, C. Walton and H. Zare who, albeit unwittingly, helped fashion the results presented here into a coherent form.
\bibliography{compare}
\bibliographystyle{plain}
\end{document}